\def \beq {\begin{eqnarray}}
\def \eeq {\end{eqnarray}}
\def \beqn {\begin{eqnarray*}}
\def \eeqn {\end{eqnarray*}}
\newcounter{for}[section]
\newtheorem{itlemma}{Lemma}[section]
\newtheorem{itproposition}[itlemma]{Proposition}
\newtheorem{itfact}[itlemma]{Fact}
\newtheorem{theorem}[itlemma]{Theorem}
\newtheorem{itcorollary}[itlemma]{Corollary}
\newtheorem{itremark}[itlemma]{Remark}
\newtheorem{itremarks}[itlemma]{Remarks}
\newtheorem{itdefinition}[itlemma]{Definition}
\newtheorem{itexample}[itlemma]{Example}
\newenvironment{fact}{\begin{itfact}\rm}{\end{itfact}}
\newenvironment{claim}{\begin{itclaim}\rm}{\end{itclaim}}
\newenvironment{lemma}{\begin{itlemma}}{\end{itlemma}}
\newenvironment{remark}{\begin{itremark}\rm}{\end{itremark}}
\newenvironment{remarks}{\begin{itremarks} \rm}{\end{itremarks}}
\newenvironment{corollary}{\begin{itcorollary}}{\end{itcorollary}}
\newenvironment{proposition}{\begin{itproposition}}{\end{itproposition}}
\newenvironment{definition}{\begin{itdefinition}\rm}{\end{itdefinition}}
\newenvironment{example}{\begin{itexample}\rm}{\end{itexample}}
\newcommand{\be}[1]{\addtocounter{for}{1} \begin{equation}\label{#1}}
\newcommand{\ee}{\end{equation}}
\newcommand{\bl}[1]{\begin{lemma}\label{#1}}
\newcommand{\br}[1]{\begin{remark}\label{#1}}
\newcommand{\brs}[1]{\begin{remarks}\label{#1}}
\newcommand{\bt}[1]{\begin{theorem}\label{#1}}
\newcommand{\bd}[1]{\begin{definition}\label{#1}}
\newcommand{\bp}[1]{\begin{proposition}\label{#1}}
\newcommand{\bfact}[1]{\begin{fact}\label{#1}}
\newcommand{\bc}[1]{\begin{corollary}\label{#1}}
\newcommand{\bex}[1]{\begin{example}\label{#1}}
\newcommand{\ec}{\end{corollary}}
\newcommand{\efact}{\end{fact}}
\newcommand{\eex}{\end{example}}
\newcommand{\el}{\end{lemma}}
\newcommand{\er}{\end{remark}}
\newcommand{\ers}{\end{remarks}}
\newcommand{\et}{\end{theorem}}
\newcommand{\ed}{\end{definition}}
\newcommand{\ep}{\end{proposition}}
\newcommand{\epr}{\end{proof}}
\newcommand{\bpr}{\begin{proof}}
\newcommand{\bcl}[1]{\begin{claim}\label{#1}}
\newcommand{\ecl}{\end{claim}}
\newcommand{\ecs}{\end{corollary}}
\newcommand{\eers}{\end{exercise}}
\newcommand{\eexs}{\end{example}}
\newcommand{\eems}{\end{example}}
\newcommand{\els}{\end{lemma}}
\newcommand{\eles}{\end{lemmaex}}
\newcommand{\ets}{\end{theorem}}
\newcommand{\eds}{\end{definition}}
\newcommand{\eps}{\end{proposition}}
\newcommand{\bi}{\begin{itemize}}
\newcommand{\ei}{\end{itemize}}
\newcommand{\ben}{\begin{enumerate}}
\newcommand{\een}{\end{enumerate}}
\def\vbar{\mathchoice{\vrule height6.3ptdepth-.5ptwidth.8pt\kern-.8pt}
   {\vrule height6.3ptdepth-.5ptwidth.8pt\kern-.8pt}
   {\vrule height4.1ptdepth-.35ptwidth.6pt\kern-.6pt}
   {\vrule height3.1ptdepth-.25ptwidth.5pt\kern-.5pt}}
\def\fudge{\mathchoice{}{}{\mkern.5mu}{\mkern.8mu}}
\def\bbc#1#2{{\rm \mkern#2mu\vbar\mkern-#2mu#1}}
\def\bbb#1{{\rm I\mkern-3.5mu #1}}
\def\bba#1#2{{\rm #1\mkern-#2mu\fudge #1}}
\def\bb#1{{\count4=`#1 \advance\count4by-64 \ifcase\count4\or\bba A{11.5}\or
   \bbb B\or\bbc C{5}\or\bbb D\or\bbb E\or\bbb F \or\bbc G{5}\or\bbb H\or
   \bbb I\or\bbc J{3}\or\bbb K\or\bbb L \or\bbb M\or\bbb N\or\bbc O{5} \or
   \bbb P\or\bbc Q{5}\or\bbb R\or\bbc S{4.2}\or\bba T{10.5}\or\bbc U{5}\or
   \bba V{12}\or\bba W{16.5}\or\bba X{11}\or\bba Y{11.7}\or\bba Z{7.5}\fi}}
\def \R {{\mathbb R}}
\def \N {{\mathbb N}}
\def \ra {\rightarrow }
\def \a{\alpha}
\def \O{\Omega}
\def \s{\sigma}
\def\A{{\mathcal{A}}}
\def\g{\gamma}
\def\d{\delta}
\def\e{\varepsilon}
\def\b{\beta}
\def\L{\Lambda}
\def\1{{\bf 1}}
\def\G{{\Gamma}}
\numberwithin{equation}{section}
\newcommand{\ind}{\mathbf{1}}
\newcommand{\real}{\mathbb{R}}
\newcommand{\integer}{\mathbb{Z}}
\renewcommand{\natural}{\mathbb{N}}
\newcommand{\ie}{\emph{i.e.\ }}
\newcommand{\eg}{\emph{e.g.\ }}
\DeclareMathSymbol{\leqslant}{\mathalpha}{AMSa}{"36} 
\DeclareMathSymbol{\geqslant}{\mathalpha}{AMSa}{"3E} 
\DeclareMathSymbol{\eset}{\mathalpha}{AMSb}{"3F}     
\renewcommand{\leq}{\;\leqslant\;}                   
\renewcommand{\geq}{\;\geqslant\;}                   
\newcommand{\la}{\label} 
\def\1{\ifmmode {1\hskip -3pt \rm{I}} \else {\hbox {$1\hskip -3pt \rm{I}$}}\fi}
\newcommand{\cD}{\ensuremath{\mathcal D}} 
\newcommand{\cE}{\ensuremath{\mathcal E}}
\newcommand{\cL}{\ensuremath{\mathcal L}}
\newcommand{\ent}{{\rm Ent} } 
\newcommand{\var}{{\rm Var} }
\let\a=\alpha \let\b=\beta   \let\d=\delta  \let\e=\varepsilon
 \let\g=\gamma     \let\k=\kappa  
  \let\s=\sigma    
   \let\G=\Gamma  \let\L=\Lambda 
\let\O=\Omega      
\def\\{\hfill\break}
\def\tthsp{\kern .083333 em}
\def\?{\mskip -10mu}
\def\indbox#1{\hbox to \parindent{\hfil\ #1\hfil} }
\def\hexnumber#1{%
  \ifcase#1 0\or 1\or 2\or 3\or 4\or 5\or 6\or 7\or 8\or
  9\or A\or B\or C\or D\or E\or F\fi}
\font\tenmsa=msam10
\font\sevenmsa=msam7
\font\fivemsa=msam5
\edef\msafamhexnumber{\hexnumber\msafam}%
\mathchardef\restriction"1\msafamhexnumber16
\mathchardef\ssim"0218
\mathchardef\square"0\msafamhexnumber03
\mathchardef\eqd"3\msafamhexnumber2C
\def\QED{\ifhmode\unskip\nobreak\fi\quad
  \ifmmode\square\else$\square$\fi}            
\font\tenmsb=msbm10
\font\sevenmsb=msbm7
\font\fivemsb=msbm5
\def\Bbb#1{\fam\msbfam\relax#1}    
\font\teneufm=eufm10
\font\seveneufm=eufm7
\font\fiveeufm=eufm5
\def\({\left(}
\def\){\right)}
\let\integer=\bZ
\let\real=\bR
\def\ee#1{{\vec {\bf e}_{#1}}}
\let\neper=e
\let\ii=i
\def\ie{\hbox{\it i.e.\ }}
\outer\def\nproclaim#1 [#2]#3. #4\par{\medbreak \noindent
   \talato(#2){\bf #1 \Thm[#2]#3.\enspace }%
   {\sl #4\par }\ifdim \lastskip <\medskipamount 
   \removelastskip \penalty 55\medskip \fi}
\def\thmm[#1]{#1}
\def\teo[#1]{#1}
\def\sttilde#1{%
\dimen2=\fontdimen5\textfont0
\setbox0=\hbox{$\mathchar"7E$}
\setbox1=\hbox{$\scriptstyle #1$}
\dimen0=\wd0
\dimen1=\wd1
\advance\dimen1 by -\dimen0
\divide\dimen1 by 2
\vbox{\offinterlineskip%
   \moveright\dimen1 \box0 \kern - \dimen2\box1}
}
\begin{document}

\title[Entropy Decay for interacting systems]
{Entropy Decay  for interacting systems via the
  Bochner--Bakry--Emery approach}

\begin{abstract}
 We obtain
 estimates on the exponential rate of decay of the relative entropy
 from equilibrium for Markov processes with a non-local infinitesimal generator. We adapt
 some of the ideas coming from the Bakry-Emery approach to this setting. In particular, we obtain
 volume-independent lower bounds for the Glauber dynamics of interacting point particles and for various classes of hardcore models.
\end{abstract}

\author[P. Dai Pra]{Paolo Dai Pra}
\address{Dipartimento di Matematica Pura e Applicata,
Università di Padova, Via Belzoni 7, 35131 Padova, Italy}
\email{daipra@math.unipd.it}
\author[G. Posta]{Gustavo Posta} \address{Dipartimento di Matematica ``F. Brioschi'', 
Politecnico di Milano, 
P.za Leonardo da Vinci 32, 
20133 Milano, Italy} 
\email{gustavo.posta@polimi.it}


\date{}

\maketitle

\thispagestyle{empty}

\section{Introduction}

The study of contractivity and hypercontractivity of Markov Semigroups has received a tremendous impulse from seminal paper \cite{Ba:Em}, which has introduced the so-called $\G_2$-approach, and has originated a number of developments in different directions (see \eg \cite{GOVW, Le, Li}). In particular, for Brownian diffusions in a convex potential, the $\G_2$-approach provides a short and elegant proof of the fact that lower bounds on the Hessian of the potential translate into lower bounds for both the {\em spectral gap} and the {\em Logarithmic Sobolev constant}. How much these ideas can be adapted to non-local operators, such as generators of discrete Markov Chains, is not yet fully understood. Although $\G_2$-type computations had been performed for specific example (see \eg \cite{Ko:Ku:Oh}), the first attempt to approach systematically this problem appeared in \cite{Bo:Ca:DP:Po}, where lower bounds on the spectral gap of various classes of generators were given. In \cite{Ca:DP:Po} and \cite{Ca:Po} we have addressed the problem of going beyond spectral gap estimates for non-local operators, looking for estimates on the exponential rate of decay of the relative entropy from equilibrium. Note that, in the case of diffusion operators, a strictly positive exponential rate is equivalent to the validity of a Logarithmic Sobolev Inequality. In the non-local case, the exponential entropy decay corresponds to a weaker inequality, to which we will refer to as the {\em Entropy Inequality} (often also called {\em Modified Logarithmic Sobolev Inequality} or $L^1$-{\em Logarithmic Sobolev Inequality}, \cite{wu1}). We have shown in \cite{Ca:DP:Po} that estimates on the best constant in the Entropy Inequality can also be obtained from a $\G_2$-approach; however, when  looking for explicit estimates, we have encountered technical difficulties, that will be illustrated in the next section. More specifically, our results were restricted to particle systems where the only allowed interactions were the exclusion rule (\cite{Ca:DP:Po}) or a {\em zero-range} interaction (\cite{Ca:Po}). 
 
 This paper improves substantially the results mentioned above; we obtain, more specifically, high temperature estimates on the best constant in the Entropy Inequality for Glauber-type dynamics of interacting systems. The main example concerns interacting point particles, where estimates on the spectral gap, as well as constants for other functional inequalities, have been obtained with various techniques \cite{Be:Ca:Ce, Ko:Ku:Oh, Ma:Wa:Wu} . This is however, to our knowledge, the first estimate concerning the Entropy Inequality, that we obtain under the classical {\em Dobrushin Uniqueness Condition}.
 
It should be made clear that the aim of this paper is to extend to non-local operators those implications of the Bakry-Emery's results which are concerned with the rate of convergence to equilibrium of Markov processes. The Bakry-Emery theory has many different, although related, applications, in particular to differential geometry. In this context, extensions to the discrete setting have also been recently considered, see \eg \cite{EM,M}.
 
 The paper is organized as follows. In Section 2 we recall the approach to the spectral gap and entropy decay rate that we have introduced in \cite{Ca:DP:Po}, to which we add the main original ingredient of this paper, consisting in a bivariate real inequality. The rest of the paper is devoted to specific examples.
 
 \section{Generalities}
 
 \subsection{The Entropy Inequality}
 
 We begin by recalling the basic {\em Functional Inequality} we will be concerned with. Consider a time-homogeneous Markov process $(X_t)_{t \geq
  0}$, with values on a measurable space $(S, {\mathcal{S}})$, having
an invariant measure $\pi$. We assume the semigroup $(T_t)_{t \geq 0}$
defined on $L^2(\pi)$ 
by
\[
T_t f(x) := E[f(X_t)|X_0 = x]
\]
is strongly right-continuous, so that the infinitesimal generator
$\cL$ exists, \ie $T_t = e^{t\cL}$. We also assume, for what follows, {\em reversibility} of the process, \ie $\cL$ is self-adjoint in $L^2(\pi)$.
We define the non-negative
quadratic form on $\cD(\cL) \times \cD(\cL)$, called {\em Dirichlet
  form} 
of $\cL$,
\[
\cE(f,g) := - \pi[f \cL g],
\]
where $\cD(\cL)$ is the domain of $\cL$, and we use the notation
$\pi[f]$ for $\int f d\pi$. Given a probability measure $\mu$ on $(S,
{\mathcal{S}})$, we denote by $\mu T_t$ the distribution of $X_t$
assuming $X_0$ is distributed 
according to $\mu$, \ie
\[
\int f d(\mu T_t) := \int (T_t f) d\mu.
\]
An ergodic Markov process, in particular a countable-state, irreducible and recurrent one, has a unique invariant measure $\pi$, and the rate of convergence of $\mu T_t$ to $\pi$ is a major topic of research. Quantitative estimates on this rate of convergence can be obtained by analyzing functional inequalities. To set up the necessary notations, define the {\em relative entropy} $h(\mu|\pi)$ of the probability $\mu$ with respect to $\pi$ by
\[
h(\mu|\pi) := \pi\left[\frac{d\mu}{d\pi} \log \frac{d\mu}{d\pi} \right],
\]
where  $h(\mu|\pi)$ is meant to be infinite whenever $\mu \not\ll \pi$
or $\frac{d\mu}{d\pi} \log \frac{d\mu}{d\pi} \not\in
L^1(\pi)$. Although $h(\cdot \, | \, \cdot)$ is not a metric in the
usual sense, its use as ``pseudo-distance'' is motivated by a number
of relevant properties, the most 
basic ones being:
\[
h(\mu|\pi) = 0 \ \iff \ \mu = \pi
\]
and (see \cite{Di:Sa} equation $(2.8)$)
\be{pinsker}
2\|\mu - \pi\|_{TV}^2 \leq h(\mu|\pi),
\end{equation}
where $\| \cdot \|_{TV}$ denotes the total variation norm. 
For a generic measurable
function $f\geq 0$ it is common to write
\[
\ent_{\pi}(f) := \left\{ \begin{array}{ll} \pi[f \log f] - \pi[f] \log \pi[f] & \mbox{if } f \log f \in L^1(\pi) \\ +\infty & \mbox{otherwise,} \end{array} \right. 
\]
so that $h(\mu|\pi) = \ent_{\pi} \left( \frac{d\mu}{d\pi} \right)$. Ignoring
technical problems concerning the domains of Dirichlet forms, a simple
formal computation shows that
\begin{equation} \label{p1}
\frac{d}{dt} h(\mu T_t | \pi) = - \cE(T_t f, \log T_t f)
\end{equation}
where $f := \frac{d\mu}{d\pi}$. 
Therefore,  assuming that, for each $f \geq 0$, the following {\em Entropy Inequality} ({\bf EI}) holds:
\be{MLSI}
\ent_{\pi}(f) \leq \frac{1}{\a}\, \cE(f, \log  f)
\end{equation}
with $\a >0$ (independent of $f$), then (\ref{p1}) can be closed to
get a differential inequality, 
obtaining
\[
 h(\mu T_t | \pi) \leq e^{- \a t}  h(\mu| \pi).
 \]
In other words, estimates on the best constant $\a$ for which the 
({\bf EI}) holds provide estimates 
for the rate of exponential convergence to equilibrium of the process, 
in the relative entropy sense. It is known  (see \cite{Di:Sa} even though ({\bf EI}) is never explicitly mentioned) that $\a \leq 2\g$, where $\g$ is the {\em spectral gap} for $\cL$:
\be{gap}
\g := \inf \{ \cE(f,f): \ \var_{\pi}(f) := \pi\left[(f-\pi[f])^2 \right] = 1\}.
\end{equation}

\subsection{Convex decay of Entropy}

We now introduce a strengthened version of ({\bf EI}).
Again at a formal level, we compute the second derivative of the entropy along the semigroup:
\begin{equation} \la{2deriv}
\frac{d^{2}}{dt^{2}} \ent_{\pi}(T_t f) = - 
\frac{d}{dt} \cE(T_t f,\log T_t f) = 
\pi\left[ \cL^2 T_t f  \log T_t f\right] + 
\pi \left[ \frac{(\cL T_t f)^2}{T_t f} \right]\,.
\end{equation}
Assume now the inequality
\begin{equation} \label{MLSI'}
\kappa\cE(f, \log f) \leq \pi[\cL^2 f \log f] + \pi\left[ \frac{(\cL f)^2}{f} \right]\,,
\end{equation}
holds for some $\kappa>0$ and every $f>0$.
Then as for the first derivative with ({\bf EI}),  \eqref{2deriv} can be closed to get the differential inequality, 
\begin{equation} \la{2deriv1}
\frac{d}{dt}\cE(T_t f,\log T_t f) \leq -\kappa \cE(T_t f,\log T_t f),
\end{equation}
from which we obtain
\[
\cE(T_t f,\log T_t f) \leq e^{-\k t} \cE(f,\log f).
\]
Rewriting \eqref{2deriv1} as
\[
\frac{d}{dt}\cE(T_t f,\log T_t f)  \leq \kappa \frac{d}{dt} \ent_{\pi}(T_t f)
\]
and integrating from $0$ to $+\infty$ we get
\[
\k \,\ent_{\pi}(f) \leq  \cE(f,\log f)\,.
\]
So \eqref{MLSI'} implies ({\bf EI}) for every $\a \geq \kappa$. This result is well known; however, when one tries to make rigorous the above arguments, some difficulties arise due to the fact that generators are only defined in suitable domains. For this reason we give here the following precise statement: although the assumptions we make are likely to be not optimal, they are sufficient to cover the applications presented in this paper.

\bp{prop:conv-ent}
Assume $\cL$ is self-adjoint in $L^2(\pi)$, and denote by $\cD(\cL)$ its domain of self-adjointness. We write $\cE(f,g) = -\pi[g \cL f]$ whenever $f \in \cD(\cL)$ and $g \in L^2(\pi)$. 
For each $M \in \N$ define
\[
\A_M := \{ f >0, \, f \in \cD(\cL^2), |\log f| \leq M, \cL f \mbox{ is bounded}\}
\]
and assume
$\A_M$ is $L^2(\pi)$-dense in $L^2_M:=\{ f >0, \, f \in L^2, |\log f| \leq M \}$.
Then, setting
\[
 \A:= \bigcup_{M > 0} \A_M,
\]
 the following results hold.
\ben
\item
({\bf EI}) holds for every $f \in \A$ if and only if
\begin{equation} \la{tech1}
\ent_{\pi}(T_t f) \leq e^{-\a t} \ent_{\pi}(f)
\end{equation}
for every $f \geq 0$ measurable, such that $\ent_{\pi}(f) < +\infty$.

\item
\eqref{MLSI'} holds for every $f \in \A$ if and only if
\[
\cE(T_t f,\log T_t f) \leq e^{-\k t} \cE(f,\log f)\,,
\]
for every $f \in \A$.

\item
If (\ref{MLSI'}) holds for some $\k$ and every $f \in \A$, then 
{\rm ({\bf EI})} holds with $\a \geq \k$ and every $f \in \A$.
\een

\ep

%
%
The proof is postponed to the Appendix.  Note that (\ref{MLSI'}) gives estimates on the second derivative of the entropy along the flow of the semigroup $T_t$. In particular, being $ \cE(f,\log f)\geq0$, it implies time convexity of the entropy. 
There are cases (see \cite{Ca:DP:Po} Section~4.2) where ({\bf EI}) holds but the entropy is non convex in time.
Therefore, (\ref{MLSI'}) is {\em strictly} stronger that ({\bf EI}).

\br{rem:spec}
By a similar proof one shows that the spectral gap $\g$ is the best constant in the inequality
\begin{equation} \label{gap-conv}
k\cE(f,f) \leq \pi\left[(\cL f)^2 \right],
\end{equation}
that is {\em equivalent} to the Poincaré inequality
\[
k \var_{\pi}(f) \leq \cE(f,f),
\]
whose best constant is, by definition, the spectral gap of $\cL$. Inequality \eqref{gap-conv} is related to the convex decay of the variance along the flow of the semigroup. Unlike the entropy, the variance decay is always convex in time.
\er

%

\subsection{A class of non-local dynamics}

Suppose the probability space $(S, \mathcal{S},\pi)$ is given,
together with a set $G$ of measurable functions $\g:S \ra S$, that we
call {\em moves}. We also assume $G$ is provided with a measurable
structure, \ie a $\s$-algebra $\mathcal{G}$ of subsets of $G$. In this
paper we deal with Markov generators that, can be written in the form
\begin{equation} \label{gen}
\cL f(\eta) = \int_G \nabla_{\g}f(\eta) c(\eta,d\g),
\end{equation}
where
\bi
\item
the {\em discrete gradient} $\nabla_{\g}$ is defined by
\[
\nabla_{\g}f(\eta) := f(\g(\eta)) - f(\eta);
\]
\item
for $\eta \in S$, $c(\eta,d\g)$ is a positive, finite measure on
$(G,\mathcal{G})$, such that for each $A \in \mathcal{G}$ the map
$\eta \mapsto c(\eta,A)$ is measurable, and  $\pi[c^2(\eta,G)] < +\infty$.

\ei
It should be stressed that not necessarily an expression as in \eqref{gen} defines a Markov generator. We assume this is the case. We make the following additional assumption on the generator $\cL$.

\vspace{0.5cm}
\noindent
{\bf \em (Rev)} There is a measurable involution
\begin{eqnarray*}
G & \ra & G \\ \g & \mapsto & \g^{-1}
\end{eqnarray*}
such that the equality $\g^{-1}(\g(\eta)) = \eta$ holds $c(\eta,d\g) \pi(d\eta)$-almost everywhere. Moreover, for every $\Psi: S \times G \ra \R$ measurable and bounded, 
\be{db}
\int \Psi(\eta,\g) c(\eta,d\g) \pi(d\eta) = \int \Psi(\g(\eta),\g^{-1}) c(\eta,d\g) \pi(d\eta).
\end{equation}
Note that, since $\pi[c^2(\eta,G)] < +\infty$ and letting $\cD_0$ be the set of bounded, measurable functions from $S$ to $\R$, we have that $\cL f \in L^2(\pi)$ for $f \in \cD_0$. Moreover, by {\bf \em (Rev)}, $\cL$ is symmetric on $\cD_0$ and, for $f,g \in \cD_0$, 
\be{dirich}
\cE(f,g) = \cE(g,f) = \frac{1}{2}\int_G\pi\left[c(\cdot,d\g) \nabla_{\g} f \nabla_{\g} g  \right].
\end{equation}
In particular \eqref{dirich} implies that $-\cL$ is a positive operator so, by considering its Friedrichs extension, $\cL$ can be extended to a domain of self-adjointness $\cD(\cL) \supseteq \cD_0$. It also follows that if $f \in \A$, where $\A$ has been defined in Proposition \ref{prop:conv-ent}, then $\log f \in \cD(\cL)$, and
\[
\pi\left[ \cL^2 T_t f  \log T_t f\right] = \pi\left[ \cL T_t f  \cL \log T_t f\right] = \int\pi\left[ c(\cdot,d\g) c(\cdot,d\d)\nabla_{\g} f \nabla_{\d}\log  f  \right]. 
\]
\bd{admissible}
A finite measure $R$ on $S \times G \times G$ is said {\em admissible} if the following conditions hold.
\bi
\item[i)]
$R$ is supported on the set $\{(\eta,\g,\d): \g(\d(\eta)) = \d(\g(\eta))\}$.
\item[ii)]
The maps $(\eta,\g,\d) \mapsto (\eta,\d,\g)$ and $(\eta,\g,\d) \mapsto (\g(\eta), \g^{-1}, \d)$ are $R$-preserving.
\ei
Similarly, we say that a nonnegative measurable function $r:S \times G \times G \ra [0,+\infty)$ is {\em admissible} if the measure $R(d\eta,d\g,d\d) := c(\eta,d\g)c(\eta,d\d)r(\eta,\g,\d) \pi(d\eta)$ is admissible.
\ed
By {\bf \em (Rev)}, it is easy to check that a function $r \in L^1( c(\eta,d\g)c(\eta,d\d)\pi(d\eta))$ is admissible if the following conditions hold:
\bi
\item[a)]
$r$ is supported on the set $\{(\eta,\g,\d): \g(\d(\eta)) = \d(\g(\eta))\}$, up to sets of zero measure for $ c(\eta,d\g)c(\eta,d\d)\pi(d\eta)$;
\item[b)]
the following equality holds $c(\eta,d\g)c(\eta,d\d)\pi(d\eta)$-almost everywhere:
\[
r(\eta,\g,\d) = r(\eta,\d,\g).
\]
\item[c)]
the equality (between measures on $G$)
\be{admissibility}
c(\eta,d\d) r(\eta,\g,\d) = c(\g(\eta), d\d) r(\g(\eta), \g^{-1},\d)
\end{equation}
holds $c(\eta,d\g) \pi(d\eta)$-almost everywhere.
\ei

Admissible measures guarantee the following Bochner-type identities. A proof of these identities is in \cite{Ca:DP:Po}; we include it here for completeness.
\bp{prop:Bochner}
The following identities hold for every bounded measurable functions $f,g:S \ra \R$:
\be{boch1}
\int \nabla_{\g}f(\eta) \nabla_{\d} g(\eta) R(d\eta,d\g,d\d) = \frac{1}{4} \int \nabla_{\g} \nabla_{\d} f(\eta) \nabla_{\g} \nabla_{\d} g(\eta) R(d\eta,d\g,d\d) ,
\end{equation}
\begin{multline} \label{boch2}
\int \frac{\nabla_{\g}f(\eta) \nabla_{\d} f(\eta)}{f(\eta)} R(d\eta,d\g,d\d)  \\ =  \frac{1}{4} \int \left[\nabla_{\g}\left(\frac{\nabla_{\d}f(\eta)}{f(\d(\eta))}\right)\nabla_{\g} \nabla_{\d} f(\eta) - \nabla_{\g}\left(\frac{(\nabla_{\d}f(\eta))^2}{f(\eta)f(\d(\eta))}\right) \nabla_{\g}f(\eta)\right] R(d\eta,d\g,d\d) .
\end{multline}
\ep
\bpr
We begin by proving \eqref{boch1}. By i) of Definition \ref{admissible},  
\[
\nabla_{\g}\nabla_{\d} f(\eta ) \nabla_{\g}\nabla_{\d}g(\eta ) = \nabla_{\g}\nabla_{\d} f(\eta ) \nabla_{\d}\nabla_{\g}g(\eta )
\]
 $R$-almost everywhere. Thus, $R$-almost everywhere,
\begin{align} 
& \nabla_{\g}\nabla_{\d} f(\eta ) \nabla_{\d}\nabla_{\g}g(\eta ) \nonumber \\
&\quad= \nabla_{\g}f(\d(\eta ))\nabla_{\d}g(\g(\eta )) - \nabla_{\g} f(\d(\eta )) \nabla_{\d}g(\eta )  - \nabla_{\g} f(\eta ) \nabla_{\d}g(\g(\eta )) + \nabla_{\g} f(\eta ) \nabla_{\d}g(\eta )\,. \label{2der}
 \end{align}
 We show that the $R$-integral of each summand of \eqref{2der} equals $\int \nabla_{\g}f(\eta) \nabla_{\d} g(\eta) R(d\eta,d\g,d\d)$, from which \eqref{boch1} follows. For the fourth summand there is nothing to prove. In the steps that follow we use admissibility of $R$, in particular first  ii), then i), then ii) and i) again of Definition \ref{admissible}, and
 the simple identity $\nabla_{\g}f(\eta) = - \nabla_{\g^{-1}}f(\g(\eta))$:
\begin{align} 
 \int \nabla_{\g}f(\eta) \nabla_{\d} g(\eta) R(d\eta,d\g,d\d) & =  \int \nabla_{\g^{-1}}f(\g(\eta)) \nabla_{\d} g(\g(\eta)) R(d\eta,d\g,d\d) \nonumber \\ &  = - \int  \nabla_{\g} f(\eta ) \nabla_{\d}g(\g(\eta ))R(d\eta,d\g,d\d) \label{3rd}
  \\ &  = - \int  \nabla_{\d} f(\eta ) \nabla_{\g}g(\d(\eta ))R(d\eta,d\g,d\d) \nonumber \\ 
 & = - \int  \nabla_{\d} f(\g(\eta) ) \nabla_{\g^{-1}}g(\d(\g(\eta) ))R(d\eta,d\g,d\d) \nonumber \\ 
 & = \int  \nabla_{\d} f(\g(\eta) ) \nabla_{\g} g(\d(\eta)) R(d\eta,d\g,d\d) \nonumber \\
 & =  \int \nabla_{\g}f(\d(\eta ))\nabla_{\d}g(\g(\eta )) R(d\eta,d\g,d\d). \label{1st}
 \end{align}
 Note that \eqref{3rd} takes care of the third (and by symmetry the second) summand, while \eqref{1st} takes care of the first summand. This completes the proof of \eqref{boch1}.
 
 We now prove \eqref{boch2}. By admissibility of $R$ (used twice), 
 \begin{align*}
 \int \frac{\nabla_{\g}f(\eta) \nabla_{\d} f(\eta)}{f(\eta)} R(d\eta,d\g,d\d)   & = - \int  \frac{\nabla_{\g}f(\d^{-1} \d(\eta)) \nabla_{\d^{-1}} f(\d(\eta))}{f(\d^{-1} \d(\eta))} R(d\eta,d\g,d\d)  
 \\ & = - \int \frac{\nabla_{\g}f(\d(\eta)) \nabla_{\d} f(\eta)}{f(\d(\eta))} R(d\eta,d\g,d\d) \\ 
 & =  \int \frac{\nabla_{\g}f(\d(\eta)) \nabla_{\d} f(\g(\eta))}{f(\g \d(\eta))} R(d\eta,d\g,d\d) .
 \end{align*}
 Thus
 \begin{multline*}
  \int \frac{\nabla_{\g}f(\eta) \nabla_{\d} f(\eta)}{f(\eta)} R(d\eta,d\g,d\d) \\ = \frac{1}{4} \left[ \int \frac{\nabla_{\g}f(\eta) \nabla_{\d} f(\eta)}{f(\eta)} R(d\eta,d\g,d\d) - \int \frac{\nabla_{\g}f(\d(\eta)) \nabla_{\d} f(\eta)}{f(\d(\eta))} R(d\eta,d\g,d\d) \right. \\ \left. +  \int \frac{\nabla_{\g}f(\d(\eta)) \nabla_{\d} f(\g(\eta))}{f(\g \d(\eta))} R(d\eta,d\g,d\d) - \int \frac{\nabla_{\g}f(\d(\eta)) \nabla_{\d} f(\eta)}{f(\d(\eta))} R(d\eta,d\g,d\d) \right]
  \end{multline*}
  that, by a simple calculation, is shown to equal the right hand side of \eqref{boch2}.

 \epr
 
 The use of admissible measures in establishing convex entropy decay is illustrated in what follows. Consider the inequality \eqref{MLSI'}; the two sides if the inequality, for generators of the form \eqref{gen} take the form
 \begin{align}
 \cE(f, \log f) & = \frac{1}{2}\pi\left[ \int c(\eta,d\g) \nabla_{\g} f(\eta) \nabla_{\g}\log f(\eta)  \right]  \label{lhs} \\
  \pi[\cL f \cL\log f] + \pi\left[ \frac{(\cL f)^2}{f} \right] & = \int\pi\left[ c(\cdot,d\g) c(\cdot,d\d) \left( \nabla_{\g} f \nabla_{\d}\log  f + \frac{ \nabla_{\g} f \nabla_{\d} f}{f} \right) \right]. \label{rhs}
  \end{align}
Admissible measures allow to modify the term \eqref{rhs}, the purpose being to make it comparable with \eqref{lhs}.
\bp{gamma}
Let $R$ be an admissible measure. Then for every $f>0$ measurable with $\log f$ bounded,
\be{gamma1}
\int R(d\eta,d\g,d\d)  \left( \nabla_{\g} f(\eta) \nabla_{\d}\log  f(\eta) + \frac{ \nabla_{\g} f(\eta) \nabla_{\d} f(\eta)}{f(\eta)} \right) \geq 0.
\end{equation}
Therefore, letting $\Gamma(d\eta,d\g,d\d) := \pi(d\eta) c(\eta,d\g) c(\eta,d\d) - R(d\eta,d\g,d\d)$, we have
\begin{multline} \label {gamma2}
\pi\left[ \int c(\eta,d\g) c(\eta,d\d) \left( \nabla_{\g} f(\eta) \nabla_{\d}\log  f(\eta) + \frac{ \nabla_{\g} f(\eta) \nabla_{\d} f(\eta)}{f(\eta)} \right) \right] \\ \geq \int \G(d\eta,d\g,d\d)  \left( \nabla_{\g} f(\eta) \nabla_{\d}\log  f(\eta) + \frac{ \nabla_{\g} f(\eta) \nabla_{\d} f(\eta)}{f(\eta)} \right).
\end{multline}
\ep
\bpr
Using \eqref{boch1} with $g = \log f$, we get
\[
\int R(d\eta,d\g,d\d)  \nabla_{\g} f(\eta) \nabla_{\d}\log  f(\eta) = \frac{1}{4} \int R(d\eta,d\g,d\d) \nabla_{\g} \nabla_{\d} f(\eta)  \nabla_{\g} \nabla_{\d}\log f(\eta) 
\]
Thus, using also \eqref{boch2},
\begin{multline}
\int R(d\eta,d\g,d\d)  \left( \nabla_{\g} f(\eta) \nabla_{\d}\log  f(\eta) + \frac{ \nabla_{\g} f(\eta) \nabla_{\d} f(\eta)}{f(\eta)}\right)  = \\ \frac{1}{4}  \int R(d\eta,d\g,d\d) \left[  \nabla_{\g} \nabla_{\d} f(\eta)  \nabla_{\g} \nabla_{\d}\log f(\eta) + \nabla_{\g}\left(\frac{\nabla_{\d}f(\eta)}{f(\d(\eta))}\right)\nabla_{\g} \nabla_{\d} f(\eta) \right. \\ \left.  - \nabla_{\g}\left(\frac{(\nabla_{\d}f(\eta))^2}{f(\eta)f(\d(\eta))}\right) \nabla_{\g}f(\eta)\right] \label{gamma2-1}
\end{multline}
The fact that \eqref{gamma2-1} is nonnegative, follows from the nonnegativity of 
\[
\nabla_{\g} \nabla_{\d} f(\eta)  \nabla_{\g} \nabla_{\d}\log f(\eta) + \nabla_{\g}\left(\frac{\nabla_{\d}f(\eta)}{f(\d(\eta))}\right)\nabla_{\g} \nabla_{\d} f(\eta)   - \nabla_{\g}\left(\frac{(\nabla_{\d}f(\eta))^2}{f(\eta)f(\d(\eta))}\right) \nabla_{\g}f(\eta)
\]
for {\em every} $\eta,\g,\d$. Indeed, setting $a:=f(\eta ), b:=f(\d(\eta )), c := f(\g(\eta )), d := f(\d \g(\eta ))$, one checks that this last expression equals the sum of the following $4$ expressions
\[
\begin{array}{c}
d\log d - d \log(bc/a) + (bc/a) - d \\ c\log c - c \log(da/b) + (da/b) -c \\ b\log b - b \log(da/c) + (da/c) - b \\ a \log a - a \log(bc/d) + (bc/d) -a
\end{array}
\]
which are all non-negative, since $\a \log \a - \a \log \b + \b - \a
\geq 0$ for every $\a,\b >0$. The proof is therefore completed.

\epr

By \eqref{lhs}, \eqref{rhs} and Proposition \ref{gamma}, convex decay of entropy, \ie inequality \eqref{MLSI'} follows by showing
\be{final}
\int \Gamma(d\eta,d\g,d\d)  \left( \nabla_{\g} f(\eta) \nabla_{\d}\log  f(\eta) + \frac{ \nabla_{\g} f(\eta) \nabla_{\d} f(\eta)}{f(\eta)} \right) \geq  \frac{\kappa}{2}  \int\pi\left[c(\cdot,d\g) \nabla_{\g} f \nabla_{\g}\log f  \right]
\end{equation}
for every $f>0$ measurable, with $\log f$ bounded.
To illustrate the treatment of \eqref{final}, we consider the corresponding inequality for the spectral gap studied in \cite{Bo:Ca:DP:Po}:
\be{eq:gap}
\int \Gamma(d\eta,d\g,d\d) \nabla_{\g} f(\eta) \nabla_{\d}  f(\eta) \geq \frac{k}{2} \int\pi\left[ c(\cdot,d\g)\left( \nabla_{\g} f\right)^2 \right].
\end{equation}
The strategy to obtain \eqref{eq:gap} can be described in two steps.
\bi
\item[i)]
Determine an admissible function $r(\eta,\g,\d)$ and a ``nearly diagonal'' $D \subseteq G \times G$ such that
\begin{multline} \label{eq:gap1}
\int_D \Gamma(d\eta,d\g,d\d) \nabla_{\g} f(\eta) \nabla_{\d}  f(\eta) =
\int_D\pi\left[  c(\cdot,d\g)c(\cdot,d\d) [1-r(\cdot,\g,\d)] \nabla_{\g} f \nabla_{\d}  f \right] \\
\geq u  \int\pi\left[ c(\cdot,d\g)\left( \nabla_{\g} f\right)^2 \right]
\end{multline}
for some $u>0$.
\item[ii)]
The remaining integral on $D^c$ is estimated from below using the inequality $2ab \geq -a^2 - b^2$ which, by symmetry, yields
\begin{multline} \label{eq:gap2}
\pi\left[ \int_{D^c} c(\eta,d\g)c(\eta,d\d) [1-r(\eta,\g,\d)] \nabla_{\g} f(\eta) \nabla_{\d}  f(\eta) \right]  \geq\\  - \int_{D^c}\pi\left[ c(\cdot,d\g)c(\cdot,d\d) |1-r(\cdot,\g,\d)| \left(\nabla_{\g} f \right)^2 \right] \geq - h  \int \pi\left[ c(\cdot,d\g)\left( \nabla_{\g} f\right)^2 \right],
\end{multline}
where
\be{eq:gap3}
h := \sup_{\eta,\g} \int_{\{\d: (\g,\d) \in D^c\}} c(\eta,d\d) |1-r(\eta,\g,\d)|.
\end{equation}
\ei
If $h<u$, we thus obtain \eqref{eq:gap} with $k:= 2(u-h)$. 

The feasibility of steps i) and ii) above depends on a suitable choice of an admissible function $r$. We do not have a general procedure to determine it. It turns out, for example, that equation \eqref{admissibility} does not uniquely (up to constant factors) determine $r$. Condition \eqref{admissibility} is, for instance, satisfied by
\be{canonical}
r(\eta,\g,\d) := \frac{1}{2} \left[\frac{c(\g(\eta),d\d)}{c(\eta,d\d)} +1\right],
\end{equation}
which is well defined whenever the Radon-Nikodym derivative $\frac{c(\g(\eta),d\d)}{c(\eta,d\d)}$ exists. Not necessarily, however, \eqref{canonical} defines an admissible function, in particular it is not necessarily supported on the set $\{(\eta,\g,\d): \g(\d(\eta)) = \d(\g(\eta))\}$. The admissible functions in the examples in \cite{Bo:Ca:DP:Po}, \cite{Ca:DP:Po} as well as those in this paper, are all obtained by suitable modifications of \eqref{canonical}.

The main purpose of this paper is to extend the procedure above to inequality \eqref{final}. The main difficulty consists in the comparison of the ``off diagonal terms''
\[
\int_{D^c}\pi\left[  c(\cdot,d\g)c(\cdot,d\d) [1-r(\cdot,\g,\d)] \left( \nabla_{\g} f \nabla_{\d}\log  f + \frac{ \nabla_{\g} f \nabla_{\d} f}{f} \right) \right]
\]
with corresponding diagonal terms (\ie $\d = \g$). The simple inequality $2ab \geq -a^2 - b^2$ is the replaced by the following inequality.
\bl{lemma:key}
The following inequality holds for every $a,b>0$:
\begin{multline} \label{key}
(a-1) \log b + (b-1) \log a + 2(a-1)(b-1) \\ \geq 
- \left[ 
(a-1) \log a + (
b-1) \log b  + 
\frac{(a-1)^2}{a} + \frac{(b-1)^2}{b}
\right].
\end{multline}
\el
\bpr
Inequality \eqref{key} can be rewritten as
 \begin{equation}
  (a+b-2) \log(ab) + 2ab - (a+b) - 2 + \frac{a+b}{ab} \geq 0 . \label{key3}
 \end{equation}
 Letting $z := a+b$, $w=ab$, we are left to show that for $z,w>0$
 \begin{equation}\label{key4}
 (z-2) \log w +2 w - z - 2 + \frac{z}{w} \geq 0.
 \end{equation}
 
 \noindent
 {\em Case $z \geq 2$}.
 Using the inequality $\log(1+x) \leq x$ for every $x>-1$,
 \[
 (z-2) \log w = -(z-2) \log\left(1+ \frac{1-w}{w} \right) \geq -(z-2)\frac{1-w}{w}.
 \]
 Thus
\[
 (z-2) \log w + 2 w - z - 2 + \frac{z}{w} \geq  2\left(w + \frac{1}{w} - 2 \right) \geq 0.
 \]

  \noindent
 {\em Case $z < 2$}.  Using again $\log(1+x) \leq x$ for every $x>-1$,
 \[
 (z-2) \log w = (z-2) \log[1+(w-1)] \geq (z-2)(w-1),
 \]
 so
 \[
 (z-2) \log w + 2 w - z - 2 + \frac{z}{w} \geq  z\left(w + \frac{1}{w} - 2 \right) \geq 0.
 \]
\epr
Letting 
\[
a:= \frac{f(\g(\eta))}{f(\eta)} \ \ b:=  \frac{f(\d(\eta))}{f(\eta)},
\]
\eqref{key} becomes
\begin{multline} \label{ineqkey}
\nabla_{\g} f(\eta) \nabla_{\d} \log f(\eta) + \nabla_{\d} f(\eta) \nabla_{\g} \log f(\eta) + 2 \frac{\nabla_{\g} f(\eta) \nabla_{\d} f(\eta)}{f(\eta)} \\ \geq - \nabla_{\g} f(\eta) \nabla_{\g} \log f(\eta) - \nabla_{\d} f(\eta) \nabla_{\d} \log f(\eta) - \frac{\left(\nabla_{\g} f(\eta)\right)^2}{f(\g(\eta))} - \frac{\left(\nabla_{\d} f(\eta)\right)^2}{f(\d(\eta))}.
\end{multline}

\section{Examples} \label{sec:examples}

\subsection{Glauber dynamics of particles in the continuum} \label{sec:gas}
\label{sec:cont}
Let $\O$ be the set of locally finite subsets of $\R^d$. We provide $\O$ with the weakest topology that, for every continuous $f :\R^d \ra \R$ with compact support, makes the maps $\eta \mapsto \sum_{x \in \eta} f(x)$ continuous. Measurability on $\O$ is provided by the corresponding Borel $\s$-field.

Now let $\L$ be a bounded Borel subset of $\R^d$ of nonzero Lebesgue measure, and set
\[
S := \O_{\L} := \{\eta \in \O: \eta \subseteq \L\}.
\]
Consider a nonnegative measurable and even function $\varphi:\R^d \ra [0,+\infty)$ (everything works with minor modifications for $\varphi:\R^d \ra [0,+\infty]$ allowing ``hardcore repulsion''). We fix a {\em boundary condition} $\tau \in \O_{\L^c} := \{ \eta \in \O: \eta \subseteq \L^c\}$, and define the Hamiltonian $H^{\tau}_{\L}:S \ra [0,+\infty]$
\begin{equation}
H^{\tau}_{\L}(\eta) = \sum_{\stackrel{\{x,y\} \subseteq \eta \cup \tau}{\scriptscriptstyle \{x,y\} \cap \L \neq \emptyset}} \varphi(x-y).
\label{hamcont}
\end{equation}
The dependence of $H^{\tau}_{\L}$ on $\L$ and $\tau$ is omitted in the sequel.
We assume the nonnegative pair potential $\varphi$ and the inverse temperature $\b$ to satisfy 
 the condition
\be{integrability}
\e(\b):= \int_{\R^d} \left(1-e^{-\b \varphi(x)}\right)dx < +\infty.
\end{equation}
For $N\in\N$ we let $S_N=\{\eta \in S : |\eta| = N\}$
denote the subset of $S$ consisting of all possible configurations
of $N$ particles in $\Lambda$. Note that a measurable function $f:S_N \ra \R$ may be identified with a symmetric function from $\L^N \ra \R$. With this identification, we assume, for every  $N\in\N$, that the boundary condition $\tau$ is such that $H(\eta) < +\infty$ in a subset of $\L^N$ having positive Lebesgue measure. Functions from $S$ to $\R$ may be identified with symmetric functions from $\bigcup_n \L^n$ to $\R$. With this identification, we define the finite volume {\em grand canonical} Gibbs measure $\pi$ with inverse temperature $\b>0$ and activity $z>0$ by
\begin{equation}
\pi[f] := \frac{1}{Z} \sum_{n=0}^{+\infty} \frac{z^n}{n!} \int_{\L^n}
e^{-\b H(x)}f(x)\,dx,
\label{nuf}
\end{equation}
where $Z$ is the normalization. We define the creation and annihilation maps on $S$: for $x \in \L$
\[
\g^+_x(\eta) = \eta \cup \{x\},
\qquad\qquad
\g^-_x(\eta) = \eta \setminus \{x\}.
\]

We let $G:= \{\g^+_x,\g^-_x: x \in \L\}$. In the sequel we write $\nabla^+_x$ and $\nabla^-_x$ rather than $\nabla_{\g^+_x}$ and $\nabla_{\g_x^-}$. Note that $\nabla^-_x f(\eta) = 0$ unless $x \in \eta$. We consider the following Markov generator
\be{genglac}
\cL f(\eta) := \sum_{x \in \eta} \nabla^-_x f(\eta) + z \int_{\L} e^{-\b\nabla^+_x H(\eta)} \nabla^+_x f(\eta)\,dx.
\end{equation}
It is shown in \cite{Be:Ca:Ce}, Proposition 2.1, that $\cL$ generates a Markov semigroup. 
This generator is 
 of the form (\ref{gen}) if we define $c(\eta,d\g)$ by
\[
\int F(\g)c(\eta,d\g) := \sum_{x \in \eta} F(\g^-_x) + z \int_{\L} e^{-\b\nabla^+_x H(\eta)} F(\g^+_x)\,dx.
\]
In particular, it is easy to show that the reversibility condition (\ref{db}) holds, after having observed that
\[
\left(\g^+_x\right)^{-1} = \g^-_x, \ \ \left(\g^-_x\right)^{-1} = \g^+_x.
\]
Moreover $c(\eta,G) \leq |\eta| + C |\L|$, where $|\L|$ is the Lebesgue measure of $\L$; therefore $\pi[c^2(\eta,G)] < +\infty$.

Now we define
\begin{eqnarray}
r(\eta,\g^+_x,\g^+_y) & = & \frac{dc(\g^+_x \eta, \cdot)}{dc( \eta, \cdot)}(\g^+_y) \ = \ \exp\left[- \b \nabla^+_x \nabla^+_y H(\eta)\right] \ = \ \exp\left[-\b\varphi(x-y)\right] \nonumber \\
r(\eta,\g^-_x,\g^-_y) & = & \frac{dc(\g^-_x \eta, \cdot)}{dc( \eta, \cdot)}(\g^-_y)  \ = \ \begin{cases} 1 & \mbox{for } x,y \in \eta, x \neq y \\ 0 & \mbox{otherwise} \end{cases} \label{rglac} \\
r(\eta,\g_x^-,\g_y^+) & = & r(\eta,\g_x^+,\g_y^-) \ = \ 1. \nonumber
\end{eqnarray}
\bl{lemma:glac1}
The function $r$ is admissible.
\el
\bpr
Note that the set $\{(\eta,\g,\d): \g(\d(\eta)) = \d(\g(\eta))\}$ has full measure for $ c(\eta,d\g)c(\eta,d\d)\pi(d\eta)$. Indeed, the only exception to commutativity $\g \circ \d (\eta) = \d \circ \g (\eta)$ is for $\g = \g_x^-$, $\d = \g_x^+$, $x \not\in \eta$; but it is easily seen that the set
\[
\{ (\eta,\g,\d): \, \exists x \not\in \eta \mbox{ such that } \g = \g_x^-, \, \d = \g_x^+\}
\]
is null for $ c(\eta,d\g)c(\eta,d\d)\pi(d\eta)$. Moreover, the symmetry condition $r(\eta,\g,\d) = r(\eta,\d,\g)$ is clear by definition of $r$. Thus, it is enough to prove \eqref{admissibility}. First, let $\g = \g_x^+$. Then 
\begin{multline} \label{adm1}
\int c(\eta, d\d)r(\eta,\g_x^+,\d)F(\d)  = \sum_{y \in \eta} r(\eta,\g_x^+,\g_y^-)F(\g_y^-) + z \int_{\L} e^{-\b\nabla^+_y H(\eta)}r(\eta,\g_x^+,\g_y^+) F(\g^+_y)\,dy \\ 
= \sum_{y\in\eta} F(\g_y^-) + z \int_{\L} e^{-\b\nabla^+_y H(\g_x^+(\eta))}F(\g^+_y)\,dy.
\end{multline}
Similarly
\begin{multline} \label{adm2}
\int c(\g^+_x(\eta), d\d) r(\g^+_x(\eta), \left(\g^+_x\right)^{-1},\d) F(\d) \\ = \sum_{y \in\g^+_x(\eta)} r(\g^+_x(\eta),\g_x^-,\g_y^-)F(\g_y^-) + z \int_{\L} e^{-\b\nabla^+_y H(\g^+_x(\eta))}r(\g^+_x(\eta),\g_x^-,\g_y^+) F(\g^+_y)dy \\ =  \sum_{y\in\eta} F(\g_y^-) +z \int_{\L} e^{-\b\nabla^+_y H(\g_x^+(\eta))}F(\g^+_y)dy,
\end{multline}
which shows \eqref{admissibility} for this case. The case $\g = \g_x^-$ is dealt with similarly.
\epr

\bt{th:glac}
Let $\e(\b)$ be the quantity defined in (\ref{integrability}) and assume $z \e(\b)<1$. Then
inequality \eqref{MLSI'} holds for 
\[
\kappa = 1-z\e(\b).
\]
Thus, for $z\e(\b)<1$, the entropy decays exponentially with a rate which is uniformly positive in  $\L$ and in the boundary condition $\tau$.
\et
\bpr
It is enough to prove \eqref{final}. First observe that, by ({\bf Rev}) and  \eqref{dirich},
\be{dirich1}
\cE(f,g) = \pi\left[ \sum_{x} \nabla_{x}^- f \nabla_{x}^- g\right] =z \int_{\L}\pi\left[ e^{-\b\nabla^+_x H} \nabla^+_x f \nabla^+_x g \right]dx.
\end{equation}
We have
\begin{multline} \label{glac-dec}
\int \Gamma(d\eta,d\g,d\d)  \left( \nabla_{\g} f(\eta) \nabla_{\d}\log  f(\eta) + \frac{ \nabla_{\g} f(\eta) \nabla_{\d} f(\eta)}{f(\eta)} \right) \\ = 
\int\pi\left[c(\cdot,d\g)c(\cdot,d\d) [1-r(\cdot,\g,\d)] \left( \nabla_{\g} f \nabla_{\d}\log  f + \frac{ \nabla_{\g} f \nabla_{\d} f}{f} \right)\right] \\
= \pi\left[ \sum_{x} \nabla_{x}^- f \nabla_{x}^- \log f\right] + \pi\left[ \sum_{x} \frac{\left( \nabla_{x}^-f\right)^2}{f} \right] \\
+ z^2\int_{\L^2}\pi\left[e^{-\b \nabla^+_x H} e^{-\b \nabla^+_y H} \left(1-e^{-\b \varphi(x-y)}\right)\left( \nabla^+_x f \nabla^+_y \log f + \frac{ \nabla^+_x f \nabla^+_y  f}{f} \right)\right]dx\,dy.
\end{multline}
By \eqref{dirich1}, the first summand in the r.h.s. of \eqref{glac-dec} equals $\cE(f,\log f)$. For the third summand we use \eqref{ineqkey}, together with the facts that, being $\varphi \geq 0$, we have $e^{-\b \nabla^+_y H(\eta)} \leq 1$ and $1-e^{-\b \varphi(x-y)} \geq 0$:
\begin{multline} \label{glac-dec2}
z^2 \int_{\L^2}\pi\left[e^{-\b \nabla^+_x H} e^{-\b \nabla^+_y H} \left(1-e^{-\b \varphi(x-y)}\right)\left( \nabla^+_x f \nabla^+_y \log f + \frac{ \nabla^+_x f \nabla^+_y  f}{f} \right)  \right]dx\,dy \\ 
\geq - z^2 \int_{\L^2}\pi\left[ e^{-\b \nabla^+_x H} e^{-\b \nabla^+_y H} \left(1-e^{-\b \varphi(x-y)}\right)\left( \nabla^+_x f \nabla^+_x \log f +\frac{\left( \nabla_x^+ f \right)^2}{f\circ \g_x^+}\right) \right]dx \\
\geq - z^2 \e(\b)  \int_{\L}\pi\left[e^{-\b \nabla^+_x H}\nabla^+_x f \nabla^+_x \log f  \right]dx
  - z^2 \e(\b)  \int_{\L}\pi\left[e^{-\b \nabla^+_x H} \frac{\left(\nabla_x^+ f \right)^2}{f\circ \g_x^+} \right]dx.
 \end{multline}
 Since
 \[
 z\int_{\L} \pi\left[e^{-\b \nabla^+_x H(\eta)}\nabla^+_x f \nabla^+_x \log f  \right]dx = \cE(f,\log f),
 \]
 and, by reversibility,
\[
 z\int_{\L}\pi\left[  e^{-\b \nabla^+_x H} \frac{\left(\nabla_x^+ f \right)^2}{f\circ \g_x^+} \right]dx  = \pi\left[ \sum_{x} \frac{\left( \nabla_{x}^-f\right)^2}{f} \right],
 \]
 by \eqref{glac-dec} and \eqref{glac-dec2} we obtain
 \begin{multline*}
 \int \Gamma(d\eta,d\g,d\d)  \left( \nabla_{\g} f(\eta) \nabla_{\d}\log  f(\eta) + \frac{ \nabla_{\g} f(\eta) \nabla_{\d} f(\eta)}{f(\eta)} \right) \\
 \geq 
 (1-z\e(\b)) \cE(f,\log f) + (1-z\e(\b)) \pi\left[ \sum_{x} \frac{\left( \nabla_{x}^-f\right)^2}{f} \right] \geq (1-z\e(\b)) \cE(f,\log f),
  \end{multline*}
 which completes the proof of \eqref{final}.
 
\epr

\br{rem:gap}
Theorem \ref{th:glac} provides the lower bound $\a \geq 1-z\e(\b)$ for the best constant $\a$ in the entropy inequality. Note that it coincides with the lower bound, obtained e.g. in \cite{Bo:Ca:DP:Po}, for the spectral gap $\g$. The upper bound $\g \leq 1+z\e(\b)$ has also be obtained in \cite{wu2}.

\er

\subsection{Interacting birth and death processes and a simple non perturbative example}

With no essential change, the arguments in Section \ref{sec:gas} can be adapted to the following discrete version of the model, which can be viewed as describing  a family of interacting birth and death processes.
Let $h:\integer^d\to[0,+\infty)$ be such that $h(0)=0$, $h(-x)=h(x)$ and 
\begin{displaymath}
  \sum_{x\in\integer^d}h(x)<+\infty.
\end{displaymath}
Define $\Lambda_L:=\integer^d\cap[1,L]^d$ and consider for $\eta\in S:=\Omega_L:=\{\eta: \Lambda_L\to\natural\cup\{0\}\}$, the \emph{pair potential}:
\begin{displaymath}
  \varphi(x,y,\eta):=h(x-y)\eta(x)\eta(y),
\end{displaymath}
and the Hamiltonian (a boundary condition can be added as is section~\ref{sec:cont})
\begin{equation}
  \label{eq:Ham}
  H(\eta):=\frac{1}{2}\sum_{x,y\in\Lambda_L}\varphi(x,y,\eta).
\end{equation}
We assume the function $h$ to satisfy the condition
\begin{displaymath}
  \epsilon(\beta):=
  \sum_{x\in\integer^d}\left(1-e^{-\beta h(x)}\right)<+\infty
\end{displaymath}
for $0\leq\beta<\beta_0$.
The finite volume grand canonical Gibbs measure $\pi$ with inverse temperature $\beta$ and activity $z$ is the probability measure defined on $S$ as
\begin{displaymath}
  \pi(\eta):=
  \frac{1}{Z}e^{-\beta H(\eta)}\prod_{x\in\Lambda_L}\frac{z^{\eta(x)}}{\eta(x)!},
\end{displaymath}
where $Z$ is the normalization.
Fix $x\in \integer^d$; given any configuration $\eta\in S$ we define $\eta\pm\delta_x$ as $(\eta\pm\delta_x)(y):=\eta(y)\pm\ind(x=y)$.
Define also the creation and annihilation maps at $x$, $\gamma_x^\pm: S\to S$, as
\begin{displaymath}
  \gamma_x^+(\eta):=\eta+\delta_x,
  \qquad\qquad
  \gamma_x^-(\eta):=
  \begin{cases}
    \eta-\delta_x & \text{if $\eta(x)>0$} \\
    \eta  & \text{otherwise.}
  \end{cases}
\end{displaymath}
We let $G:=\{\gamma_x^-,\gamma_x^+:x\in T\}$.
In the sequel we write $\nabla_x^+$ and $\nabla_x^-$ rather than $\nabla_{\gamma_x^+}$ and $\nabla_{\gamma_x^-}$.
We consider the Markov generator
\begin{displaymath}
  \mathcal{L}f(\eta):=
  \sum_{x\in\Lambda_L}\left[\eta(x)\nabla_x^-f(\eta)+ze^{-\beta\nabla_x^+H(\eta)}\nabla_x^+f(\eta)\right].
\end{displaymath}
It is easy to show that $\mathcal{L}$ is self adjoint in $L^2(\pi)$, and that generates a Markov semigroup.
It can be written in the form \eqref{gen} by defining $c(\eta,d\gamma)$ analogously to section~\ref{sec:cont}.
In particular, the condition $\pi[c^2(\eta,G)] < +\infty$ is satisfied.
By defining the admissible function $r:S\times G\times G\to \real$,
\begin{eqnarray*}
r(\eta,\g^+_x,\g^+_y) & = & \frac{dc(\g^+_x \eta, \cdot)}{dc( \eta, \cdot)}(\g^+_y) \ = \ \exp\left[- \b \nabla^+_x \nabla^+_y H(\eta)\right]  \\
r(\eta,\g^-_x,\g^-_y) & = & \frac{dc(\g^-_x \eta, \cdot)}{dc( \eta, \cdot)}(\g^-_y)  \ = \ \begin{cases}
  \frac{\eta(x)-1}{\eta(x)} & \text{if $x=y$ and $\eta(x)>0$,} \\
  1 & \text{otherwise}
\end{cases}  \\
r(\eta,\g_x^-,\g_y^+) & = & r(\eta,\g_x^+,\g_y^-) \ = \ 1, 
\end{eqnarray*}
and following the same arguments of section~\ref{sec:cont}, it can be shown that Theorem~\ref{th:glac} holds also in this case.

The condition $z\epsilon(\beta)<1$, under which the convex exponential decay of entropy has been established in both the continuous and discrete space, is a \emph{high temperature/low density condition}, \ie a condition which states that the measure $\pi$ and the associated dynamics generated by $\mathcal{L}$ are  small perturbations of a system of independent particles,  for which \eqref{MLSI} holds by standard tensorization properties.

It is interesting to observe that the same technique can be applied to cases which are \emph{far} from a product case, by requiring some convexity on the Hamiltonian $H$.
This is quite natural in the $\Gamma_2$ approach (see \cite{Ba:Em}). However, the nonlocality of the generators is a source of serious limitations. The main problem is the fact that  inequality~\eqref{key} is only bivariate: rather surprisingly, ``natural'' multivariate extensions of it are false. This forces us to consider systems of only {\em two} interacting birth and death processes.

In the notations of the present section choose $d=1$, $L=2$, $z=1$, $H(\eta)=K(\eta_1+\eta_2)$, with $K$ an increasing convex function (\eg $K(u)=u^2$).
Notice that under these  conditions $\nabla_{1}^+H=\nabla_{2}^+H\geq0$ and $\nabla_1^+\nabla_1^+H=\nabla_1^+\nabla_2^+H=\nabla_2^+\nabla_2^+H\geq0$.
As in the proof of Theorem~\ref{th:glac} it can be shown that, for $f>0$ with $\log f$ bounded,
\begin{multline*} \label{glac-dec}
  \int \Gamma(d\eta,d\gamma,d\delta)  \left( \nabla_{\gamma} f(\eta) \nabla_{\delta}\log  f(\eta) + \frac{ \nabla_{\gamma} f(\eta) \nabla_{\delta} f(\eta)}{f(\eta)} \right)\\ 
  =\sum_{x=1}^2\pi\left[\eta(x)\left\{\nabla_x^- f \nabla_x^- \log f+\frac{\left( \nabla_x^-f\right)^2}{f}\right\}\right] \\
  +\sum_{x,y=1}^2\pi\left[e^{-\beta \nabla^+_x H} e^{-\beta \nabla^+_y H} \left(1-e^{-\beta\nabla_x^+\nabla_y^+H}\right)\left( \nabla^+_x f \nabla^+_y \log f + \frac{ \nabla^+_x f \nabla^+_y  f}{f} \right) \right].
\end{multline*}
By erasing a positive term, using reversibility and symmetrizing, we get
\begin{multline*}
  \int \Gamma(d\eta,d\gamma,d\delta)  \left( \nabla_{\gamma} f(\eta) \nabla_{\delta}\log  f(\eta)
    + \frac{ \nabla_{\gamma} f(\eta) \nabla_{\delta} f(\eta)}{f(\eta)} \right)\\
  \geq\mathcal{E}(f,\log f)+\sum_{x=1}^2\pi\left[\eta(x)\frac{\left( \nabla_x^-f\right)^2}{f}\right]+
  \sum_{x=1}^2\pi\left[e^{-2\beta \nabla^+_x H}\left(1-e^{-\beta\nabla_x^+\nabla_x^+H}\right)\nabla^+_x f \nabla^+_x \log f \right]\\
  +\frac{1}{2}\sum_{x\not=y}\pi\left[e^{-\beta \nabla^+_x H}e^{-\beta \nabla^+_y H}\left(1-e^{-\beta\nabla_x^+\nabla_y^+H}\right)\left( \nabla^+_x f \nabla^+_y \log f+\nabla^+_y f \nabla^+_x \log f + 2\frac{ \nabla^+_x f\nabla^+_y f}{f} \right) \right]
\end{multline*}

Using \eqref{ineqkey} and reversibility on the last term we obtain:
\begin{multline*}
  \frac{1}{2}\sum_{x\not=y}\pi\left[e^{-\beta \nabla^+_x H}e^{-\beta \nabla^+_y H}\left(1-e^{-\beta\nabla_x^+\nabla_y^+H}\right)\left( \nabla^+_x f \nabla^+_y \log f+\nabla^+_y f \nabla^+_x \log f + 2\frac{ \nabla^+_x f\nabla^+_y f}{f} \right) \right]\\
  \geq-\sum_{x=1}^2\pi\left[e^{-\beta \nabla^+_1 H}e^{-\beta \nabla^+_2 H}\left(1-e^{-\beta\nabla_1^+\nabla_2^+H}\right)\left\{ \nabla^+_x f \nabla^+_x \log f+ \frac{ (\nabla^+_x f)^2}{f\circ\gamma_x^+} \right\} \right]\\
  =-\sum_{x=1}^2\pi\left[e^{-2\beta \nabla^+_x H}\left(1-e^{-\beta\nabla_x^+\nabla_x^+H}\right)\nabla^+_x f \nabla^+_x \log f \right]\\
  -\sum_{x=1}^2\pi\left[\eta(x)e^{ -\beta(\nabla_x^+H)\circ\gamma_x^-}\left\{1-e^{-\beta (\nabla_x^+\nabla_x^+H)\circ\gamma_x^-}\right\}\frac{ (\nabla_x^- f)^2}{f}\right].
\end{multline*}
So we have that
\begin{multline*}
  \int \Gamma(d\eta,d\gamma,d\delta)  \left( \nabla_{\gamma} f(\eta) \nabla_{\delta}\log  f(\eta)
    + \frac{ \nabla_{\gamma} f(\eta) \nabla_{\delta} f(\eta)}{f(\eta)} \right)\geq
  \\
  \mathcal{E}(f,\log f)
  +\sum_{x=1}^2\pi\left[\eta(x)\frac{\left( \nabla_x^-f\right)^2}{f}\right]
  -\sum_{x=1}^2\pi\left[\eta(x)e^{-\beta (\nabla_x^+H)\circ\gamma_x^-}\left\{1-e^{-\beta(\nabla_x^+\nabla_x^+H)\circ\gamma_x^-}\right\}\frac{ (\nabla_x^- f)^2}{f}\right].
\end{multline*}
We can conclude that inequality \eqref{MLSI'} holds with $\kappa=1$ for any $\beta\geq0$ by observing that, under the current assumptions  on $H$:
\begin{displaymath}
  \eta(x)e^{-\beta\nabla_x^+H(\eta-\delta_x)}\left\{1-e^{-\beta\nabla_x^+\nabla_x^+H(\eta-\delta_x)}\right\}
  \leq \eta(x)
\end{displaymath}
for any $\eta\in S$, $x\in\{1,2\}$.

\subsection{A general hardcore model}
\label{sec:hc}

In this section we present a general birth and death process taking values in the set of multi-subsets of a given finite set. 
While it is possible, with minimum effort, to establish similar results for more general interactions, we limit our analysis to models where the interaction takes the form of a general {\em exclusion rule}.

Let $T$ be a finite set and consider the configuration space $S:=\{\eta:T\to\natural\cup\{0\}\}$.
In $S$ there is a natural (partial) order relation defined by $\eta,\xi\in S$, $\eta\leq\xi$ if and only if $\eta(x)\leq\xi(x)$ for any $x\in T$.
A \emph{decreasing} subset $A$ of $S$ is an $A\subseteq S$ with the property that given $\eta\in S$ and $\xi\in A$ with $\eta\leq\xi$ then $\eta\in A$.

Fix a decreasing $A\subseteq S$ as the set of \emph{allowed configuration} and an \emph{intensity} $\nu:T\to(0,+\infty)$.
We can define the probability measure on $S$ given by
\begin{displaymath}
  \pi(\eta):=
  \frac{\ind(\eta\in A)}{Z}\prod_{x\in T}\frac{\nu(x)^{\eta(x)}}{\eta(x)!},
\end{displaymath}
where $Z$ is the normalization.

We are going to define a Markov chain on $S$ reversible with respect to $\pi$.
Fix $x\in T$, given any configuration $\eta\in S$ we define $\eta+\delta_x$ and $\eta-\delta_x$ as $(\eta\pm\delta_x)(y):=\eta(y)\pm\ind(x=y)$.
Define also the creation and annihilation maps at $x$, $\gamma_x^\pm: A\to A$ as
\begin{displaymath}
  \gamma_x^+(\eta):=
  \begin{cases}
    \eta+\delta_x & \text{if $\eta+\delta_x\in A$} \\
    \eta  & \text{otherwise,}
  \end{cases}
  \qquad\qquad
  \gamma_x^-(\eta):=
  \begin{cases}
    \eta-\delta_x & \text{if $\eta-\delta_x\in A$} \\
    \eta  & \text{otherwise.}
  \end{cases}
\end{displaymath}
We let $G:=\{\gamma_x^-,\gamma_x^+:x\in T\}$.
In the sequel we write $\nabla_x^+$ and $\nabla_x^-$ rather than $\nabla_{\gamma_x^+}$ and $\nabla_{\gamma_x^-}$.
Observe that $\nabla_x^-f(\eta)=0$ if $\eta(x)=0$ and $\nabla_x^+f(\eta)=0$ if $\eta+\delta_x\not\in A$.
Consider now the Markov generator
\begin{equation}
  \label{eq:gln}
  \mathcal{L}f(\eta)=
  \sum_{x\in T}\left[\eta(x)\nabla_x^-f(\eta)+\nu(x)\nabla_x^+f(\eta)\right].
\end{equation}
It is easy to check that $\mathcal{L}$ is self-adjoint in $L^2(\pi)$,  it can be written in the form \eqref{gen},
with $\pi[c^2(\eta,G)] < +\infty$.

Now we define
\begin{eqnarray}
r(\eta,\g^+_x,\g^+_y) & = & \frac{dc(\gamma^+_x(\eta), \cdot)}{dc( \eta, \cdot)}(\gamma^+_y) \ = \ \ind(\eta+\delta_x+\delta_y\in A) \nonumber \\
r(\eta,\g^-_x,\g^-_y) & = & \frac{dc(\gamma^-_x(\eta), \cdot)}{dc( \eta, \cdot)}(\gamma^-_y)  \ = \
\begin{cases}
  \frac{\eta(x)-1}{\eta(x)} & \text{if $x=y$ and $\eta(x)>0$,} \\
  1 & \text{otherwise}
\end{cases}
\label{rln} \\
r(\eta,\g_x^-,\g_y^+) & = & r(\eta,\g_x^+,\g_y^-) \ = \ 1. \nonumber
\end{eqnarray}
It is elementary to check that $r$ is \emph{admissible}.
This allows us to prove the following result.
\begin{theorem}
  \label{teo:hc}
  Define
\begin{align*}
  \epsilon_0&:=
  \sup_{x,\eta:\eta(x)>0}\sum_{y:y\not=x}\nu(y)\ind(\eta-\delta_x+\delta_y\in A)\ind(\eta+\delta_y\not\in A)\\
  \epsilon_1&:=
  \inf_{x,\eta:\eta(x)>0}\nu(x)\ind(\eta+\delta_x\not\in A),
\end{align*}
and assume $\epsilon_0  \leq 1$. Then inequality \eqref{MLSI'} holds for $\kappa=1-\epsilon_0+\epsilon_1$.
\end{theorem}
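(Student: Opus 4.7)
\bpr[Proof proposal]
The plan is to follow the template used for Theorem~\ref{th:glac}, verifying inequality \eqref{final} with $\kappa = 1 - \epsilon_0 + \epsilon_1$. First I would write out the measure $\Gamma(d\eta,d\g,d\d) = \pi(d\eta) c(\eta,d\g) c(\eta,d\d)[1-r(\eta,\g,\d)]$ atom-by-atom, using \eqref{rln} and the fact that $c(\eta,\{\g_x^-\}) = \eta(x)$, $c(\eta,\{\g_x^+\}) = \nu(x)$. The cross terms $(\g_x^\pm,\g_y^\mp)$ vanish because there $r\equiv 1$. The $(\g_x^-,\g_y^-)$ contribution vanishes unless $x=y$ and $\eta(x)>0$, in which case $1-r = 1/\eta(x)$; combining with the weight $\eta(x)^2$, this diagonal piece produces exactly $\cE(f,\log f) + \pi\!\left[\sum_x \eta(x)(\nabla_x^- f)^2/f\right]$. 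What remains are the creation--creation pieces with weight $\nu(x)\nu(y)\ind(\eta+\delta_x+\delta_y\notin A)$.

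Next I would separate the creation--creation pieces into $x=y$ and $x\neq y$. The diagonal piece ($x=y$) is already nonnegative, so the tactic is to harvest an $\epsilon_1$-worth out of it. By the $\pi$-reversibility performed on $\g_x^+$ (i.e.\ the change of variable $\xi = \eta + \delta_x$ combined with $\pi(\xi-\delta_x)\nu(x) = \pi(\xi)\xi(x)$), this piece equals
\[
\sum_x \pi\!\left[\eta(x)\nu(x)\ind(\eta+\delta_x\notin A)\left(\nabla_x^- f\,\nabla_x^-\log f + (\nabla_x^- f)^2/f(\eta-\delta_x)\right)\right].
\]
Since $\nu(x)\ind(\eta+\delta_x\notin A) \geq \epsilon_1$ on $\{\eta(x)>0\}$ and each summand is nonnegative, this is bounded below by $\epsilon_1 \cE(f,\log f)$.

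For the off-diagonal piece ($x\neq y$) I would symmetrize in $(x,y)$ and apply the key bivariate inequality \eqref{ineqkey} with $a = f(\g_x^+\eta)/f(\eta)$, $b = f(\g_y^+\eta)/f(\eta)$, which yields
\[
\sum_{x\neq y} \pi\!\left[\nu(x)\nu(y)\ind(\eta+\delta_x+\delta_y\notin A)\left(\nabla_x^+ f\,\nabla_y^+\log f + \tfrac{\nabla_x^+ f\,\nabla_y^+ f}{f}\right)\right] \geq -\sum_{x\neq y} \pi\!\left[\nu(x)\nu(y)\ind(\eta+\delta_x+\delta_y\notin A)\!\left(\nabla_x^+ f\,\nabla_x^+\log f + \tfrac{(\nabla_x^+ f)^2}{f(\g_x^+\eta)}\right)\right].
\]
Applying again the change of variable $\xi=\eta+\delta_x$, the condition $\eta+\delta_x+\delta_y\notin A$ becomes $\xi+\delta_y\notin A$, and the requirement $\eta+\delta_y\in A$ (needed for the original $\nabla_y^+$ to be active) becomes $\xi-\delta_x+\delta_y\in A$. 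The right-hand side is therefore equal to
\[
-\pi\!\left[\sum_x \eta(x)\,c_0(x,\eta)\left(\nabla_x^- f\,\nabla_x^-\log f + (\nabla_x^- f)^2/f\right)\right],\qquad c_0(x,\eta):= \sum_{y\neq x}\nu(y)\ind(\eta-\delta_x+\delta_y\in A)\ind(\eta+\delta_y\notin A),
\]
and $c_0(x,\eta)\leq \epsilon_0$ on $\{\eta(x)>0\}$.

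Putting everything together gives
\[
\int \Gamma(d\eta,d\g,d\d)\!\left(\nabla_\g f\,\nabla_\d \log f + \tfrac{\nabla_\g f\,\nabla_\d f}{f}\right) \geq (1-\epsilon_0+\epsilon_1)\,\cE(f,\log f) + (1-\epsilon_0)\,\pi\!\left[\sum_x \eta(x)(\nabla_x^- f)^2/f\right],
\]
and the assumption $\epsilon_0\leq 1$ lets us drop the second term to conclude \eqref{final}. The main bookkeeping obstacle I expect is to track which indicators survive when translating between $\nabla^+$ and $\nabla^-$ form via reversibility: the $\ind(\eta-\delta_x+\delta_y\in A)\ind(\eta+\delta_y\notin A)$ pairing in the definition of $\epsilon_0$ is exactly what emerges once one keeps track of the domain restrictions on both gradients.
\epr
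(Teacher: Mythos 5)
Your proposal is correct and follows essentially the same route as the paper's proof: the same atom-by-atom decomposition of $\Gamma$, harvesting $\epsilon_1$ from the creation--creation diagonal via reversibility, symmetrizing the off-diagonal creation terms and applying \eqref{ineqkey}, then converting back to $\nabla^-$ form to recognize the $\epsilon_0$-weight, ending with the same bound $(1-\epsilon_0+\epsilon_1)\,\cE(f,\log f)+(1-\epsilon_0)\sum_x\pi\bigl[\eta(x)(\nabla_x^- f)^2/f\bigr]$. In fact your version is slightly cleaner at one point: after reversibility the diagonal creation term should indeed read with $\nabla_x^-$ (as you write), whereas the paper's corresponding display shows $\nabla_x^+$, which is an evident typo since $\ind(\cdot+\delta_x\notin A)\nabla_x^+ f\equiv 0$.
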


\begin{proof}
  Observe that
  \begin{align*}
    1-r(\eta,\gamma_x^+,\gamma_y^+)&=\ind(\eta+\delta_x+\delta_y\not\in A) \\
    1-r(\eta,\gamma_x^-,\gamma_y^-)&=\frac{\ind(x=y,\eta(x)>0)}{\eta(x)} \\
    1-r(\eta,\gamma_x^-,\gamma_y^+)&=1-r(\eta,\gamma_x^+,\gamma_y^-)=0.
  \end{align*}
Thus, for $f>0$ with $\log f$ bounded,
\begin{multline*} 
\int \Gamma(d\eta,d\gamma,d\delta)  \left( \nabla_{\gamma} f(\eta) \nabla_{\delta}\log  f(\eta) + \frac{ \nabla_{\gamma} f(\eta) \nabla_{\delta} f(\eta)}{f(\eta)} \right) \\
= 
\int\pi\left[  c(\cdot,d\gamma)c(\cdot,d\delta) [1-r(\cdot,\g,\delta)] \left( \nabla_{\gamma} f \nabla_{\delta}\log  f + \frac{ \nabla_{\gamma} f \nabla_{\delta} f}{f} \right)\right] \\
= \sum_{x \in T}\pi\left[\eta(x)\left\{\nabla_{x}^- f \nabla_{x}^- \log f+\frac{\left( \nabla_{x}^-f\right)^2}{f}\right\} \right] \\
+\sum_{x \in T}\nu^2(x)\pi\left[\ind(\cdot+\delta_x\in A)\ind(\cdot+2\delta_x\not\in A)\left\{\nabla_{x}^+ f \nabla_{x}^+ \log f+\frac{\left( \nabla_{x}^+f\right)^2}{f}\right\} \right]+
\\
+\sum_{x\not=y}\nu(x)\nu(y)\pi
\left[
  \ind(\cdot+\delta_x\in A)\ind(\cdot+\delta_y\in A)\ind(\cdot+\delta_x+\delta_y\not\in A)
  \left\{
    \nabla_{x}^+ f \nabla_{y}^+ \log f+
    \frac{\nabla_{x}^+f\nabla_{y}^+f}{f}
  \right\}
\right].
\end{multline*}
Following the by now usual steps, using reversibility and symmetrization, we obtain
\begin{multline*} 
\int \Gamma(d\eta,d\gamma,d\delta)  \left( \nabla_{\gamma} f(\eta) \nabla_{\delta}\log  f(\eta) + \frac{ \nabla_{\gamma} f(\eta) \nabla_{\delta} f(\eta)}{f(\eta)} \right)\geq \\  
\geq\mathcal{E}(f,\log f) +\sum_{x \in T}\pi\left[\eta(x)\frac{\left( \nabla_{x}^-f\right)^2}{f} \right]+\sum_{x \in T}\nu(x)\pi\left[\eta(x)\ind(\cdot+\delta_x\not\in A)\nabla_{x}^+ f \nabla_{x}^+ \log f \right]
\\
+\frac{1}{2}\sum_{x\not=y}\nu(x)\nu(y)\pi
\Bigg[
  \ind(\cdot+\delta_x\in A)\ind(\cdot+\delta_y\in A)\ind(\cdot+\delta_x+\delta_y\not\in A)\\
  \times\Bigg(
    \nabla_{x}^+ f \nabla_{y}^+ \log f+
    \nabla_{y}^+ f \nabla_{x}^+\log f+
    2\frac{\nabla_{x}^+f\nabla_{y}^+f}{f}
  \Bigg)
\Bigg].
\end{multline*}
Finally, by  \eqref{ineqkey}
\begin{multline*}
  \frac{1}{2}\sum_{x\not=y}\nu(x)\nu(y)\pi
\Bigg[
  \ind(\cdot+\delta_x\in A)\ind(\cdot+\delta_y\in A)\ind(\cdot+\delta_x+\delta_y\not\in A)\\
  \times\Bigg(
    \nabla_{x}^+ f \nabla_{y}^+ \log f+
    \nabla_{y}^+ f \nabla_{x}^+\log f+
    2\frac{\nabla_{x}^+f\nabla_{y}^+f}{f}
  \Bigg)
\Bigg]\\
\geq -\sum_{x\not=y}\nu(x)\nu(y)\pi\left[\ind(\cdot+\delta_x\in A)\ind(\cdot+\delta_y\in A)\ind(\cdot+\delta_x+\delta_y\not\in A)\left(\nabla_{x}^+ f \nabla_{x}^+\log f+
    \frac{(\nabla_{x}^+f)^2}{f\circ\gamma_x^+}\right)\right],
\end{multline*}
and using reversibility on the last term we obtain
\begin{multline*}
  \frac{1}{2}\sum_{x\not=y}\nu(x)\nu(y)\pi
\Bigg[
  \ind(\cdot+\delta_x\in A)\ind(\cdot+\delta_y\in A)\ind(\cdot+\delta_x+\delta_y\not\in A)\\
  \times\Bigg(
    \nabla_{x}^+ f \nabla_{y}^+ \log f+
    \nabla_{y}^+ f \nabla_{x}^+\log f+
    2\frac{\nabla_{x}^+f\nabla_{y}^+f}{f}
  \Bigg)
\Bigg]\\
\geq
-\sum_{x\not=y}\nu(y)\pi\left[\eta(x)\ind(\cdot-\delta_x+\delta_y\in A)\ind(\cdot+\delta_y\not\in A)\left(\nabla_{x}^- f \nabla_{x}^-\log f+
    \frac{(\nabla_{x}^-f)^2}{f}\right)\right]\\
\geq-\epsilon_0\left\{\mathcal{E}(f,\log f)+\sum_{x\in T}\pi\left[\eta(x)\frac{(\nabla_x^-f)^2}{f}\right]\right\}.
\end{multline*}

Observing that
\[
\sum_{x \in T}\nu(x)\pi\left[\eta(x)\ind(\cdot+\delta_x\not\in A)\nabla_{x}^+ f \nabla_{x}^+ \log f \right] \geq \epsilon_1 \mathcal{E}(f,\log f),
\]
all this sums up to
\begin{multline*} 
\int \Gamma(d\eta,d\gamma,d\delta)  \left( \nabla_{\gamma} f(\eta) \nabla_{\delta}\log  f(\eta) + \frac{ \nabla_{\gamma} f(\eta) \nabla_{\delta} f(\eta)}{f(\eta)} \right)\\  
\geq(1-\epsilon_0+\epsilon_1)\mathcal{E}(f,\log f) +(1-\epsilon_0)\sum_{x \in T}\pi\left[\eta(x)\frac{\left( \nabla_{x}^-f\right)^2}{f} \right].
\end{multline*}
\end{proof}

In the next examples we give some application of Theorem~\ref{teo:hc}.

\subsubsection{The hardcore model}
\label{ex:hm}
Let $G=(V,E)$ a finite, connected (symmetric, simple) graph (we let $E\subseteq\{\{x,y\}\subseteq V: x\not=y\}$).
Take $T:=V$, $\nu\equiv\rho>0$ and
\begin{displaymath}
  A:=\left\{\eta\in S: \text{$\eta(x)\in\{0,1\}$ for any $x\in V$ and  $\eta(x)\eta(y)=0$ if $\{x,y\}\in E$}\right\}.
\end{displaymath}
Then define the \emph{maximum degree of $G$} as $\Delta:=\max_{x\in V}\deg(x,V)=\max_{x\in V}\sum_{y\in V}\ind(\{x,y\}\in E)$.
We have that $\epsilon_0=\Delta\rho$ and $\epsilon_1=\rho$.
This gives  $\kappa\geq1-\rho(\Delta-1)$ for $\rho\leq1/\Delta$, \ie the mixing time does not depend on the size of the graph provided that $\rho\leq1/\Delta$.

The hardcore model has been widely studied in literature (see \cite{Le:Pe:Wi} section~22.4 and the discussion therein).
The best result on the mixing time for this model on general graph known by authors is the fast mixing result for $\rho<2/(\Delta-2)$ contained in \cite{Lu:Vi,Vi}.
We want to stress that the model considered in \cite{Lu:Vi,Vi} is a discrete time Markov chain which can be compared with our result by using Theorem~20.3 of \cite{Le:Pe:Wi}.

\subsubsection{Loss Networks}
\label{ex:ln}

For a complete introduction to loss networks we refer to \cite{Ke}.
Here we give only a brief sketch of the model.

Consider a finite, connected (symmetric, simple) graph $G=(V,E)$ and a function $C:E\to\natural\cup\{+\infty\}$  called \emph{capacity function}.
A \emph{path} in $G$ is a sequence $(e_1,\dots,e_n)$ of edges in $E$ such that $e_i\cap e_{i+1}\not=\emptyset$, $i=1,\dots,n$ and $e_i\not=e_j$ for any $i\not=j$.
Given a path $x=(e_1,\dots,e_n)$ and an edge $e\in E$ we say that $e$ belongs to $x$ if $e=e_i$ for some $i\in\{1,\dots,n\}$.
We write $e\in x$ in this case.
Let $T$ be a collection of paths in $G$.
A configuration  $\eta$ is an element of $S:=\{\eta:T\to\natural\cup\{0\}\}$.
$G$ should be thought as the graph of a ``telecommunication network'' in which $v\in V$ are ``callers'' and $e\in E$ are ``links''.
$T$ represents the set of possible ``routes'' which a call can use to connect two callers.
For $\eta\in S$ and $x\in T$, $\eta(x)$ is the number of routes of type $x\in T$.
So, given $\eta\in S$ the number of calls using the link $e\in E$ is $\sum_{x\ni e}\eta(x)$.
We impose that there are at most $C(e)$ calls using the link $e$ by requiring that the set of allowed route is given by the decreasing set
\begin{displaymath}
  A=
  \left\{\eta\in S: \sum_{x\ni e}\eta(x)\leq C(e) \text{ for any $e\in E$}\right\}.
\end{displaymath}
Now fix an \emph{intensity function} $\nu:T\to(0,+\infty)$. 
The generator given by \eqref{eq:gln} is the generator a Markov chain in which calls arrive independently with intensity $\nu$.
If a call which violates the constraint defined by $A$ arrives, it is rejected.
Any call lasts for an exponential time of mean 1.
Is should be clear that $\epsilon_0$ is small if $\max_{x\in T}\nu(x)$ is small enough (depending on the geometry of $G$, $T$ and on the function $C$).
So we can get lower bound on $\kappa$ by taking small intensities.

\subsubsection{Long hard rods}
\label{ex:hr}
This is a statistical mechanics model for liquid crystals.
See \cite{Di:Gi} for a deeper discussion of the model. 
Let $L,k\in\natural$, with $L\gg k$.
Consider the graph $G:=(V,E)$ where $V:=\integer^2\cap[0,L]^2$ and
 $E:=\{\{(u_1,u_2),(v_1,v_2)\}\subseteq V:(u_1-v_1)^2+(u_2-v_2)^2=1\}$.
An \emph{horizontal rod} of length $k$ is a sequence of $k+1$ adjacent
vertexes of $V$ in ``horizontal'' direction
\begin{displaymath}
  \{(u_1,u_2),(u_1+1,u_2),\dots,(u_1+k,u_2)\}.
\end{displaymath}
Denote by $T_+$ the set of horizontal rods of length $k$. 
Similarly a \emph{vertical rod} of length $k$ is a sequence of $k+1$ adjacent
vertexes of $V$ in ``vertical'' direction
\begin{displaymath}
  \{(u_1,u_2),(u_1,u_2+1),\dots,(u_1,u_2+k)\}.
\end{displaymath}
Denote by $T_-$ the set of vertical rods of length $k$.
We set $T=T_+\cup T_-$,
\begin{displaymath}
  A:=\{\eta\in S: \eta(x)\in\{0,1\},\  \eta(x)\eta(y)=0 \text{ if }  x\not= y \text{ and } x\cap y\not=\emptyset\}
\end{displaymath}
(\ie rods can not touch), and $\nu\equiv\rho>0$.
We then obtain $\epsilon_0=\rho (k^2+4k+1)$, $\epsilon_1=\rho$ so that if $\rho\leq1/(k^2+4k+1)$ then $\kappa\geq1-\rho(k^2+4k)$.
We recall that for $k$ sufficiently large (see \cite{Di:Gi}) the Gibbs measure $\pi$ exhibits a phase transition in the limit $L\to+\infty$ at some point $\rho_c$ which is expected to be of order $1/k^2$.
We therefore obtain the exponential decay of entropy for $\rho$ up to $1/k^2$, which has the same order in $k$ as the critical value $\rho_c$.

\section{Appendix: Proof of Proposition \ref{prop:conv-ent}}

We first observe that $\A_M$ is $T_t$-stable, i.e. if $f \in \A_M$, than $T_t f \in \A_M$ too. The implication
\[
e^{-M} \leq f \leq e^M \ \Rightarrow \ e^{-M} \leq T_t f \leq e^M,
\]
follows from positivity of $T_t$. Moreover, by a standard application of functional calculus, if $f \in \cD(\cL^2)$ then $T_t f \in  \cD(\cL^2)$ and $\cL^2 T_t f = T_t \cL^2 f$.
\ben
\item
Assume ({\bf EI}) holds for every $f \in \A$. By definition of generator, if $f \in \cD(\cL^2) \subseteq  \cD(\cL)$, then $T_t f$ is differentiable in the $L^2$ sense. We also claim that, if $f \in \A_M$, then $\log T_t f$ is $L^1$-differentiable, and
\[
\frac{d}{dt} \log T_t f := L^1-\lim_{h \ra 0} \frac{\log T_{t+h} f - \log T_t f}{h} = \frac{\cL T_t f}{T_t f}.
\]
Indeed, using the inequality $|\log(1+x) - x| \leq \frac{x^2}{1+x}$, we have
\[
\begin{split}
\left\| \frac{\log T_{t+h} f - \log T_t f}{h} - \frac{\cL T_t f}{T_t f} \right\|_1  & = \,  \left\| \frac{1}{h}\log\left(1+ \frac{h}{T_t f} \frac{T_{t+h}f - T_t f}{h} \right)  - \frac{\cL T_t f}{T_t f} \right\|_1 \\
& \leq \,  \left\| \frac{1}{h}\left[\log\left(1+ \frac{h}{T_t f} \frac{T_{t+h}f - T_t f}{h} \right) - \frac{h}{T_t f} \frac{T_{t+h}f - T_t f}{h}\right] \right\|_1 \\ & \, \  \ + \, 
\left\| \frac{ \frac{T_{t+h}f - T_t f}{h}}{T_t f} - \frac{\cL T_t f}{T_t f} \right\|_1 \\
& \leq \, \left\| \frac{1}{h} \frac{ \left(\frac{h}{T_t f}\right)^2 \left(\frac{T_{t+h}f - T_t f}{h} \right)^2}{T_{t+h} f /T_{t} f } \right\|_1 + \, 
\left\| \frac{ \frac{T_{t+h}f - T_t f}{h}}{T_t f} - \frac{\cL T_t f}{T_t f} \right\|_1 \\
& \leq |h| e^{2M}   \left\|\left(\frac{T_{t+h}f - T_t f}{h} \right)^2 \right\|_1 + e^M \left\|  \frac{T_{t+h}f - T_t f}{h} - \cL T_t f \right\|_1 \\
& \leq  |h| e^{2M}   \left\|\frac{T_{t+h}f - T_t f}{h} \right\|_2^2 +  e^M \left\|  \frac{T_{t+h}f - T_t f}{h} - \cL T_t f \right\|_2,
\end{split}
\]
and both last summands go to zero as $h \ra 0$.

Now, assuming without loss of generality that $\pi[f] =1$, we show that the expression
\[
\ent_{\pi}(T_t f) = \pi[T_t f \log T_t f]
\]
can be differentiated commuting derivative with expectation, obtaining
\be{tech3}
\frac{d}{dt} \ent_{\pi}(T_t f) = - \cE(T_t f,\log T_t f).
\end{equation}
Indeed
\[
\frac1h [\ent_{\pi}(T_{t+h} f) - \ent_{\pi}(T_t f)] =  \pi\left[ \frac{T_{t+h}f - T_t f}{h} \log T_{t+h} f \right] + \pi\left[ \frac{\log T_{t+h} f - \log T_t f}{h} T_t f \right].
\]
Since both $ \log T_{t+h} f$ and $T_t f$ are uniformly bounded, we can take the limit in the above expression, obtaining \eqref{tech3}. From \eqref{tech3}, using ({\bf EI}) and Gromwall's Lemma, we get \eqref{tech1} for every $f \in \A$. Now, for $f \in L^2_M$, by Assumption A there is a sequence $f_n \in \A_M$ which
converges to $f$ in $L^2(\pi)$, which implies $T_t f_n \ra T_t f$ in $L^2(\pi)$; note that also $\log f_n$ (resp. $\log T_t f_n$) converges to $\log f$ (resp. $\log T_t f$) in $L^2(\pi)$ (e.g. observe that $x \mapsto \log x$ is Lipschitz continuous in $[e^{-M}, e^M]$). Thus $\ent_{\pi}(f_n) = \pi[f_n \log f_n] - \pi[f_n] \log \pi[f_n] \ra \pi[f \log f] = \ent_{\pi}(f)$, which implies that \eqref{tech1} holds for $f \in \cup_M L^2_M$. Finally, set $f$ with $\ent_{\pi}(f) < +\infty$, and set 
\[
f_M := (f \wedge e^M)\vee e^{-M}.
\]
Clearly $f_M \in \A_M$, $f_M \ra f$ a.s. and in $L^1$ as $M \ra +\infty$. Moreover
\[
\left( f_M \log f_M \right)^+ \leq (f \log f)^+,
\]
while $\left( f_M \log f_M \right)^-$ is bounded uniformly. Thus, by dominated convergence,
\[
\ent_{\pi}(f_M) \ra \ent_{\pi}(f).
\]
Note that, possibly along subsequences, the same convergence holds for $T_t f_M$ and $T_t f$ in place of $f_M$ and $f$. This implies, by approximation, that \eqref{tech1} holds for every $f$ with $\ent_{\pi}(f) < +\infty$.

We now show the converse implication. Assume \eqref{tech1} holds for every $f \geq 0$ measurable, such that $\ent_{\pi}(f) < +\infty$. In particular, it holds for $f \in \A$. The functions of $t$ $\ent_{\pi}(T_t f)$ and $e^{-\a t} \ent_{\pi}(f)$ are both differentiable, and coincide at $t=0$. Necessarily
\[
\frac{d}{dt} \ent_{\pi}(T_t f) \big|_{t=0} \leq \frac{d}{dt}e^{-\a t} \ent_{\pi}(f)\big|_{t=0},
\]
which gives ({\bf EI}).

\item
Note that, so far, we have not used all properties of $\A_M$, but only the facts that $ f \in \cD(\cL)$ and $ |\log f| \leq M$. The other  properties are used below to take the second derivative of the entropy. 

Suppose \eqref{MLSI'} holds for all $f \in \A$. The point is to justify the differentiation
\[
\frac{d}{dt} \cE(T_t f,\log T_t f) = -  \frac{d}{dt} \pi\left[ T_t \cL f \, \log T_t f \right].
\]
Similarly to what we have done in point 1, we can differentiate using the product rule since:
\bi
\item
$t \mapsto T_t \cL f$ is $L^2$-differentiable for $f \in \A$, since $\cL f \in \cD(\cL)$, and $\log T_t f $ is uniformly bounded;
\item
$\log T_t f$ is $L^1$-differentiable, and $T_t \cL f$ is uniformly bounded.
\ei
We obtain
\be{tech4}
\frac{d}{dt} \cE(T_t f,\log T_t f) = 
- \pi\left[ \cL T_t f \cL \log T_t f\right] -  
\pi \left[ \frac{(\cL T_t f)^2}{T_t f} \right]
\end{equation}
that, by \eqref{MLSI'} and Gromwall's Lemma, implies $\cE(T_t f,\log T_t f) \leq e^{-\k t} \cE(f,\log f)$. Conversely, \eqref{MLSI'} follows from $\cE(T_t f,\log T_t f) \leq e^{-\k t} \cE(f,\log f)$ by taking derivatives at $t=0$, as in point 1.

\item
From \eqref{tech4}, \eqref{MLSI'} and \eqref{tech3}, we obtain
\[
- \frac{d}{dt} \cE(T_t f,\log T_t f) \geq -\k\frac{d}{dt} \ent_{\pi}(T_t f)\,.
\]
that, integrated from $0$ to $\infty$, yields
\[
k \,\ent_{\pi}(f) \leq  \cE(f, \log  f),
\]
which completes the proof.

\een

\subsection*{Acknowledgements.}
We thank P. Caputo for useful discussions.

\end{document}